\documentclass[letterpaper, 10 pt, conference]{ieeeconf}  

\IEEEoverridecommandlockouts                              

\usepackage{amsmath} 
\usepackage{amssymb}  
\usepackage{bm}
\usepackage{bbm}
\usepackage{xcolor}
\usepackage{acronym}
\usepackage{graphicx}
\usepackage{comment}
\usepackage{subcaption}
\usepackage{dsfont}
\usepackage{multicol}

\newacro{pdf}[PDF]{probability density function}
\newacro{kld}[KLD]{Kullback-Leibler divergence}
\newacro{cdf}[CDF]{cummulative distribution function}
\newacro{vgub}[VGUB]{variational Gaussian upper bound}
\newacro{scvx}[SCvx]{successive convexification}
\newacro{psd}[PSD]{power spectral density}
\newacro{ldt}[LDT]{Large Deviation theory}
\newacro{sde}[SDE]{stochastic differential equation}
\newacro{foh}[FOH]{first-order hold}
\newacro{crtbp}[CRTBP]{circular restricted three body problem}
\newacro{cs}[CS]{covariance steering}
\newacro{LTI}[LTI]{linear time invariant}

\newtheorem{theorem}{Theorem}

\newtheorem{lemma}{Lemma}

\title{\LARGE \bf Relative Entropy-Bounded Ambiguous Chance Constraints for Robust Planning in Nonlinear Systems}

\author{Trevor N. Wolf and Jay W. McMahon
\thanks{Trevor N. Wolf and Jay W. McMahon are with the Smead Department of Aerospace Engineering Sciences, University of Colorado Boulder, Boulder, CO 80303, USA {\tt\small trevor.wolf@colorado.edu}, {\tt\small jay.mcmahon@colorado.edu}}}

\begin{document}

\maketitle
\thispagestyle{empty}
\pagestyle{empty}

\begin{abstract}

We consider defining risk probability in stochastic control problems under distribution ambiguity. Current approaches for chance-constrained control typically assume that the true state distribution is known and Gaussian distributed. These assumptions are not amenable to many real-world engineering applications where system dynamics are nonlinear and only approximately modeled. In this work, we define a distribution ambiguity set and, with a variational expression for exponential integrals, bound the expected risk value under an unknown distribution that resides within a relative entropy distance of a nominal Gaussian reference distribution. Our bound recovers the reference risk value in the zero-divergence limit. A method is presented to determine the relative entropy distance defining the ambiguity set that is a function of the reference covariance evolution and second-order dynamical truncation errors. The resulting contributions provide a framework for handling distributional ambiguity in nonlinear covariance steering problems. A stochastic spacecraft guidance example is presented to demonstrate our contributions. 

\end{abstract}


\section{INTRODUCTION}

For designing guidance and control policies, it is necessary to account for external disturbances that act on a dynamical system and shape the evolution of the state uncertainty. Chance constraints play a role in this process by ensuring that operational criteria are met with near certainty \cite{Pu_2008}. For example, in the context of spacecraft guidance, it is often necessary to impose chance constraints on the risk probability that a spacecraft collides with either other spacecraft or the central body that it orbits \cite{Chai_2020, Oguri_2021}. Most often, these types of constraints are imposed under the pretense that the true system state distribution is known exactly, and even more restrictive, that the distribution is Gaussian. Indeed, the theory of \ac{cs} \cite{Holtz_1987}, and recent extensions that incorporate chance constraints under the \ac{cs} framework \cite{Okamoto_2018}, make these exact assumptions. The known-Gaussian assumption becomes particularly problematic for highly nonlinear dynamical systems that rapidly deteriorate Gaussianity, or if the exact dynamics of said system are unknown \cite{Ermakov_2018}. 

Recent work has investigated extensions to traditional \ac{cs} by steering Gaussian mixtures, which, in principle, more accurately quantify the true underlying uncertainty for nonlinear systems \cite{Boone_2022, Kumagai_2024}. Others forgo the problem of precise uncertainty quantification by considering ambiguity in the distribution on which the constraint is imposed -- termed ambiguous or distributionally robust chance constraints \cite{Erdogan_2006}. Underpinning all of these works is the notion that a \ac{pdf} ambiguity set can be bounded through a measurable distance between distributions \cite{Kuhn_2025}.  This distance bound is functionally used to design a control policy robust to the worst-case outcome over the set of distributions residing in that set.  Recently, the Wasserstein distance has shown particular promise for designing robust guidance policies under distribution ambiguity for \ac{LTI} systems \cite{Aolaritei_2023, Pilipovsky_2024}.  Regardless of the distance measure used to define the ambiguity set, a major challenge that has not been completely addressed is how to appropriately define a bound value. 

In this work, we use a relative entropy, or \ac{kld}, bounded ambiguity set, and propose its use in the context of the ambiguous chance constrained \ac{cs} problem. The relative entropy is particularly appealing due to its numerous connections to both information theory and dynamical systems theory. This is carried out by bounding an expected value under an unknown distribution via a form of the Donsker–Varadhan variational formula \cite{Dembo_1998}. We show that in the limit as the relative entropy converges to zero, our risk value becomes the exact risk probability computed under a nominal reference distribution. Other works have explored the use of the \ac{kld} in similar contexts. For instance, the authors in \cite{Hu_2013} consider bounding ambiguity in an objective and constraint functions through the \ac{kld} and show an equivalence to the Bernstein approximation introduced in \cite{Nemirovski_2006}. Importantly, however, this work introduces an approach to determine an appropriate value for the relative entropy upper bound defining the ambiguity set. This approach is particular applicable to nonlinear covariance steering. The value derived is functionally dependent on the covariance evolution of the problem's reference distribution, and because the reference covariance appears as a decision variable, the bound can in principle be controlled to reduce a realized distribution's departure from a Gaussian reference. This work assesses the resulting robust risk upper bound in a nonlinear spacecraft guidance design problem, providing the fundamentals needed for incorporating this metric with chance-constrained covariance steering problems. 

\subsection{Notation and Definitions}

Boldface is used to denote vectors. The subscript $t$ denotes the continuous time-dependence of a variable or function. A random variable is said to belong to the probability space $(\mathcal{X}, \mathcal{F}, \mathcal{P})$, where $\mathcal{X}$ is a complete, separable metric space, $\mathcal{F}$ is the Borel $\sigma$-algebra, and $\mathcal{P}(\mathcal{X})$ denotes the collection of probability measures on $\mathcal{X}$. If $\Pi \in \mathcal{P}(\mathcal{X})$, with an associated probability density $\pi$, then the first two central moments are defined as the mean vector $\boldsymbol{\mu} = \mathbb{E}_{\pi}[\boldsymbol{x}]$, and the covariance $P = \mathbb{E}_{\pi}[(\boldsymbol{x} - \boldsymbol{\mu})(\boldsymbol{x} - \boldsymbol{\mu})^\top]$, where $\mathbb{E}_{\pi}[\cdot]$ is the statistical expectation under the measure $\Pi$. 

\section{PRELIMINARIES}

\subsection{Model Definition and Approximation}

In this section, we discuss the model and structural approximations that are commonly introduced when applying covariance steering to nonlinear systems. We assume that the true system state dynamics are modeled by the \ac{sde}
\begin{equation}
    d \boldsymbol{x}_t = \boldsymbol{\phi}_t(\boldsymbol{x}_t, \boldsymbol{u}_t) dt + Gd\boldsymbol{w}_t, \hspace{1em} \boldsymbol{x}_0 \sim \pi_0. \label{eqn:stochastic_process}
\end{equation}
The function $\boldsymbol{\phi}_t(\cdot)$ is the true nonlinear drift that is a function of the state position and velocity, $\boldsymbol{x}_t = [\boldsymbol{r}_t^\top, \boldsymbol{v}_t^\top]^\top$, and continuous-time control vector $\boldsymbol{u}_t$. The matrix $G$ is the continuous-time process noise gain matrix, and $d\boldsymbol{w}_t \sim \mathcal{N}(\boldsymbol{0}_{n\times 1}, Q dt)$ is a standard Brownian diffusion where $Q = I_{n\times n}\delta(t - \tau)$. The initial state is distributed according to $\pi_0$, which is potentially unknown. 

We assume $\boldsymbol{\phi}_t(\cdot)$ takes the form
\begin{equation}
    \boldsymbol{\phi}_t(\boldsymbol{x}_t, \boldsymbol{u}_t) = \begin{bmatrix}
        \boldsymbol{v}_t\\
        -\nabla_{\boldsymbol{r}_t} U(\boldsymbol{r}_t; \boldsymbol{\theta}) 
    \end{bmatrix} + B\boldsymbol{u}_t.
\end{equation}
While not required, the control enters the system linearly with constant gain $B$. The scalar function $U(\boldsymbol{r}_t; \boldsymbol{\theta})$ is the potential for a conservative dynamical system, e.g., the gravitational potential of a central body, that is parameterized by the vector $\boldsymbol{\theta}$. In this scenario, $\boldsymbol{\theta}$ can define, for example, the coefficients of a spherical harmonic expansion of the gravitational potential \cite{Montenbruck_2002}. The use may vary, however, in general we assume imperfect knowledge of these parameters, and therefore, model the dynamical flow with an \textit{approximate} nonlinear drift function
\begin{equation}
    \boldsymbol{f}_t(\boldsymbol{x}_t, \boldsymbol{u}_t) = \begin{bmatrix}
        \boldsymbol{v}_t\\
        -\nabla_{\boldsymbol{r}_t} U(\boldsymbol{r}_t, \hat{\boldsymbol{\theta}})
    \end{bmatrix} + B \boldsymbol{u}_t,
\end{equation}
where $\hat{\boldsymbol{\theta}}$ is our estimate for these dynamical parameters. 


\subsection{Structural Approximations}

Motivated by the application of successive covariance steering for chance constrained trajectory optimization \cite{Ridderhof_2022, Kumagai_2025}, we seek a Gaussian time-varying reference distribution $\hat{\pi}_t$ by imposing the structural approximations that 1) the initial \ac{pdf} is Gaussian distributed, and 2) its drift is approximated by a linearization of the model $\boldsymbol{f}_t$. To that end, the reference state \ac{sde} is 
\begin{equation}
    d\boldsymbol{x}_t = \hat{\boldsymbol{f}}_t(\boldsymbol{x}_t, \boldsymbol{u}_t) + G d\boldsymbol{w}_t, \hspace{1em} \boldsymbol{x}_0 \sim \mathcal{N}(\boldsymbol{\mu}_0, P_0).
\end{equation}
The approximate drift is the affine function 
\begin{equation}
    \hat{\boldsymbol{f}}_t(\boldsymbol{x}_t, \boldsymbol{u}_t) = A_t\boldsymbol{x}_t + B \boldsymbol{u}_t + \boldsymbol{r}_t,
\end{equation}
where
\begin{equation}
    A_t = \nabla_{\boldsymbol{x}_t}\boldsymbol{f}_t(\boldsymbol{\mu}_t, \boldsymbol{u}_t),
\end{equation}
and 
\begin{equation}
    \boldsymbol{r}_t = \boldsymbol{f}_t(\boldsymbol{\mu}_t, \boldsymbol{u}_t) - A_t\boldsymbol{\mu}_t - B\boldsymbol{u}_t.
\end{equation}

\section{A CHANCE CONSTRAINT VARIATIONAL UPPER BOUND UNDER DISTRIBUTION AMBIGUITY}

We introduce an approach for bounding chance constraints under distribution ambiguity — otherwise known as distributionally robust chance constraints — by using a duality relation between exponential integrals and relative entropy. As an aside, in the uncertainty quantification literature, distribution ambiguity is sometimes referred to as epistemic uncertainty, and our contributions are motivated by works developed under this nomenclature \cite{Chowdhary_2013, Chowdhary_2015, Hofman_2024}. 

Ambiguity in the time-evolving state \ac{pdf} can arise from two sources: 1) lack of knowledge in the model parameters governing the dynamical system, and 2) structural approximations imposed by linear-Gaussian assumptions.  A chance constraint under the \textit{true}, but unknown, probability distribution $\pi_t$ is formally 
\begin{equation}
    \mathbb{P}_{\boldsymbol{x}_t \sim \pi_t}[\boldsymbol{x}_t \in \mathcal{S}_t] \geq 1 - \epsilon \Longleftrightarrow \mathbb{P}_{\boldsymbol{x}_t \sim \pi_t}[\boldsymbol{x}_t \notin \mathcal{S}_t] < \epsilon,\label{eqn:chance_constraint}
\end{equation}
where $\mathcal{S}_t$ is a time-evolving envelope of acceptable states, and $\epsilon$ is a small probability value (e.g. $\epsilon = 0.01$). The latter form in (\ref{eqn:chance_constraint}), also known as the risk or violation probability, is equivalently expressed as 
\begin{equation}
    \mathbb{P}_{\boldsymbol{x}_t \sim \pi_t}[\boldsymbol{x}_t \notin \mathcal{S}_t] = \mathbb{E}_{\pi_t}[1 - \mathds{1}_{\{\boldsymbol{x}_t \in \mathcal{S}_t \}}],
\end{equation}
where $\mathds{1}_{\{ \cdot \}}$ is the indicator function. 
\begin{theorem}
    Consider the probability measures $P$ and $Q$ that are absolutely continuous, and with associated densities $p$ and $q$. Then, for any bounded function $g(\boldsymbol{x}): \mathcal{X} \rightarrow \mathbb{R}$, 
    \begin{equation}
        \frac{1}{\lambda} \ln \left(\mathbb{E}_{q}\left[e^{\lambda g(\boldsymbol{x})} \right]\right) = \sup_{P\in \mathcal{P}(\mathcal{X})}\left[-\frac{1}{\lambda} R(p\|q) + \mathbb{E}_p[g(\boldsymbol{x})] \right],
    \end{equation}
    if relative entropy $R(p\|q) < \infty$ and $\lambda$ is any positive scalar.\label{theorem:DV_formula}
\end{theorem}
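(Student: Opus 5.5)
The plan is the standard Gibbs-measure (tilted-measure) argument for the Donsker--Varadhan formula. First reduce to $\lambda=1$ by replacing $g$ with $\lambda g$, which is still bounded. Multiplying the claimed identity through by $\lambda$ then shows it is equivalent to
\begin{equation*}
\ln \mathbb{E}_q\!\left[e^{h(\boldsymbol{x})}\right] = \sup_{P}\left[\mathbb{E}_p[h(\boldsymbol{x})] - R(p\|q)\right], \qquad h = \lambda g .
\end{equation*}
Because $g$ is bounded, say $|g|\le M$, the normalizer $Z = \mathbb{E}_q[e^{h}]$ satisfies $e^{-\lambda M}\le Z \le e^{\lambda M}$. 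Hence $\ln Z$ is finite, and every expectation appearing below is well defined. The supremum is taken over those $P$ with $R(p\|q)<\infty$, as the hypothesis indicates; all other $P$ contribute $-\infty$ and can be ignored.

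Next, introduce the tilted density $q^{*}(\boldsymbol{x}) = e^{h(\boldsymbol{x})}q(\boldsymbol{x})/Z$. This is a probability density that is mutually absolutely continuous with $q$, with bounded log-density ratio $\ln(q^*/q) = h - \ln Z$. For any $p$ with $R(p\|q)<\infty$ (so $p\ll q$, hence $p\ll q^*$), expand the relative entropy to $q^*$:
\begin{equation*}
R(p\|q^{*}) = \mathbb{E}_p\!\left[\ln\frac{p}{q}\right] - \mathbb{E}_p\!\left[\ln\frac{q^*}{q}\right] = R(p\|q) - \mathbb{E}_p[h] + \ln Z .
\end{equation*}
Splitting the logarithm this way is legitimate because $\ln(q^*/q)$ is bounded and therefore $p$-integrable. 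Rearranging gives the exact identity $\mathbb{E}_p[h]-R(p\|q) = \ln Z - R(p\|q^*)$.

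Finally, apply Gibbs' inequality $R(p\|q^*)\ge 0$, which follows from Jensen's inequality applied to $-\ln$. It gives $\mathbb{E}_p[h]-R(p\|q)\le \ln Z$ for every admissible $P$, with equality if and only if $p=q^*$ almost everywhere. The candidate $q^*$ is admissible, since $R(q^*\|q)=\mathbb{E}_{q^*}[h]-\ln Z \le 2\lambda M<\infty$. Therefore the supremum is attained at $q^*$ and equals $\ln Z$. Dividing by $\lambda$ recovers the statement of Theorem~\ref{theorem:DV_formula}.

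The main obstacle is measure-theoretic bookkeeping rather than the algebra. One must justify splitting $\mathbb{E}_p[\ln(p/q^*)]$ into two finite pieces, and handle sets where $p$ vanishes or $P$ is not absolutely continuous with respect to $Q$ (using the convention $0\ln 0=0$ and $R=\infty$ otherwise). The boundedness of $g$ is exactly what makes this work. If $g$ were unbounded, I would first prove the result for the truncations $\max(\min(g,n),-n)$ and then pass to the limit by monotone convergence, but that case is not needed here.
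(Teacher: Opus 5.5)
Your argument is correct. The paper gives no proof of its own and defers to Dupuis and Ellis, and the standard proof of this variational formula there is exactly your tilted-measure identity $\mathbb{E}_p[h]-R(p\|q)=\ln Z-R(p\|q^{*})$ combined with $R(p\|q^{*})\ge 0$, with the supremum attained at $q^{*}=e^{h}q/Z$.
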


For a proof of Theorem \ref{theorem:DV_formula}, see \cite{Dupuis_1997}, Proposition 4.5.1. Setting $g(\cdot) = 1 - \mathds{1}_{\{\boldsymbol{x}_t \in \mathcal{S}_t\}} $, it follows that
\begin{flalign}
    \mathbb{P}_{\boldsymbol{x}_t \sim \pi_t}[\boldsymbol{x}_t \notin \mathcal{S}_t] &\leq \frac{1}{\lambda}\Big\{\ln\left(\mathbb{E}_{\hat{\pi}_t}\left[e^{\lambda(1 - \mathds{1}_{\{\boldsymbol{x}_t \in \mathcal{S}_t\}})}\right] \right)\nonumber\\
    & \hspace{6em}+ R(\pi_t\|\hat{\pi}_t)\Big\}\nonumber\\
    & \hspace{-5em} = 1 + \frac{1}{\lambda}\Big\{\ln\left(1 - (1 - e^{-\lambda})\mathbb{P}_{\boldsymbol{x}_t \sim \hat{\pi}_t}[\boldsymbol{x}_t \in \mathcal{S}_t] \right) \nonumber \\
    & \hspace{6em}+ R(\pi_t\|\hat{\pi}_t) \Big\}
    \label{eqn:chance_upper_bound}
\end{flalign}
The expression in (\ref{eqn:chance_upper_bound}) provides an upper bound for the true probability of violation that depends on a time-varying reference distribution $\hat{\pi}_t$, and the relative entropy, or \ac{kld}, between the true distribution and the reference:
\begin{equation}
    R(\pi_t\|\hat{\pi}_t) = \int_{\mathcal{X}} \pi_t \ln\left(\frac{\pi_t}{\hat{\pi}_t} \right)d\boldsymbol{x}_t.
    \label{eqn:rel_entropy}
\end{equation}
In this work, we let the admissible state set to be of the form 
\begin{equation}
    \mathcal{S}_t = \{ \boldsymbol{x}_t \in \mathbb{R}^n | \boldsymbol{a}_t^\top \boldsymbol{x}_t \leq b_t \},
\end{equation}
and our reference distribution to be Gaussian, then, 
\begin{flalign}
    \mathbb{P}_{\boldsymbol{x}_t \sim \pi_t}[\boldsymbol{x}_t \notin \mathcal{S}_t] &\leq 1 + \frac{1}{\lambda}\Bigg\{\ln\Bigg(1 - (1 - e^{-\lambda})\nonumber\\
    &\operatorname{cdf}\Bigg(\frac{b_t - \boldsymbol{a}_t^\top \boldsymbol{\mu}_t}{\sqrt{\boldsymbol{a}_t^\top P_t \boldsymbol{a}_t}} \Bigg) \Bigg)  + R(\pi_t\|\hat{\pi}_t) \Bigg\},
    \label{eqn:risk_upper_bound}
\end{flalign}
where the function $\operatorname{cdf}(\cdot)$ is the standard normal \ac{cdf}. In the above, the scalar $\lambda$ serves as a tuning variable controlling the tightness of the upper bound. There exists a unique minimizer $\lambda^*$ for (\ref{eqn:risk_upper_bound}) that depends on the nominal probability function and the relative entropy. In the limit as minimizer $\lambda^* \rightarrow 0$, the corresponding relative entropy tends towards $R(\pi_t\|\hat{\pi}_t) = 0$, i.e., $\pi_t = \hat{\pi}_t$, and  
\begin{flalign}
    \lim_{\lambda^* \rightarrow 0} 1 + & \frac{1}{\lambda^*}\Bigg\{\ln\Bigg(1 - (1 - e^{-\lambda^*}) \operatorname{cdf}\Bigg(\frac{b_t - \boldsymbol{a}_t^\top \boldsymbol{\mu}_t}{\sqrt{\boldsymbol{a}_t^\top P_t \boldsymbol{a}_t}} \Bigg) \Bigg)\nonumber\\
    &+ R(\pi_t\|\hat{\pi}_t) \Bigg\} = 1 - \operatorname{cdf}\Bigg(\frac{b_t - \boldsymbol{a}_t^\top \boldsymbol{\mu}_t}{\sqrt{\boldsymbol{a}_t^\top P_t \boldsymbol{a}_t}} \Bigg), 
\end{flalign}
which is the exact risk probability under a Gaussian reference distribution. The above follows from the fact that for small $\lambda$, the expression $1 - e^\lambda = \lambda + \mathcal{O}(\lambda^2)$, and consequently $\ln(1 - \lambda\operatorname{cdf(\cdot)}) = -\lambda\operatorname{cdf(\cdot)} + \mathcal{O}(\lambda^2)$  In Figure \ref{fig:risk_vs_lambda} we show the risk upper bound as a function of $\lambda$, evaluated with different values of distribution ambiguities $R(\pi_t\|\hat{\pi}_t)$.

\begin{figure}[t!]
    \centering
    \includegraphics[width=0.4\textwidth]{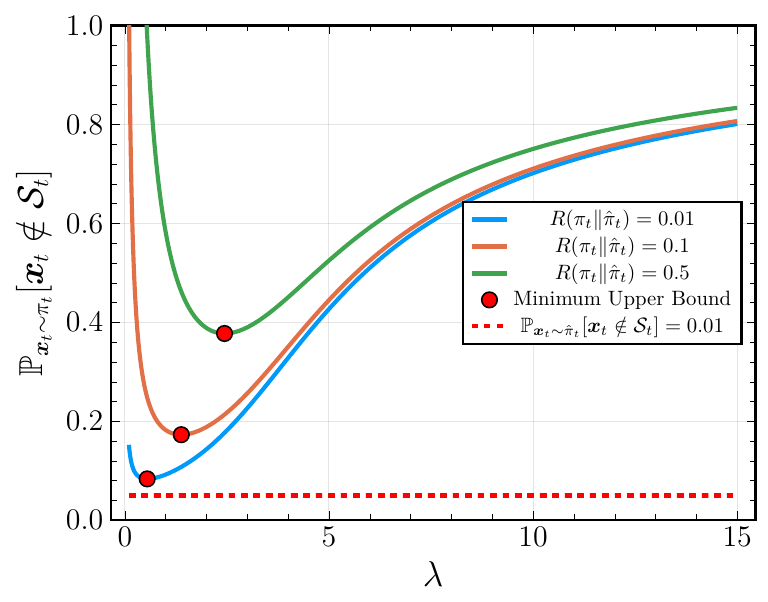}
    \caption{Evaluation of the risk upper bound under distribution ambiguity as a function of $\lambda$.}\label{fig:risk_vs_lambda}
\end{figure}

\section{CONTROLLING THE RELATIVE ENTROPY}

The previous section provides an upper bound for the true constraint violation probability that depends on a reference \ac{pdf} and a relative entropy upper bound between the true \ac{pdf} and the assumed reference. Here, we provide that time-varying relative entropy upper bound that, in the same vein as before, does not require explicit knowledge of the true \ac{pdf}. We achieve this by integrating an upper bound for the relative entropy growth rate, and we derive that rate function below. 
\begin{lemma}
    If the true and reference densities $\pi_t$ and $\hat{\pi}_t$ satisfy the Fokker-Planck equation with the same diffusion, then 
    \begin{equation}
        \frac{d}{dt}R(\pi_t\|\hat{\pi}_t) \leq \frac{1}{2} \mathbb{E}_{\pi_t} \left[ \| D^{-1/2} (\boldsymbol{\phi}_t - \hat{\boldsymbol{f}}_t)\|_2^2 \right], 
    \end{equation}
    where the matrix $D = GQG^\top$.
    \label{lem:kld_rate}
\end{lemma}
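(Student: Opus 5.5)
We differentiate the relative entropy in time, substitute the two Fokker--Planck equations, and integrate by parts. This isolates a nonpositive relative Fisher-information term and a cross term involving the drift mismatch. Young's inequality then absorbs the cross term into the dissipation.

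Write $\pi = \pi_t$, $\hat\pi = \hat\pi_t$, and $h = \ln(\pi/\hat\pi)$. The two densities satisfy
\begin{equation*}
\partial_t \pi = -\nabla\cdot(\boldsymbol{\phi}_t \pi) + \tfrac12 \nabla\cdot(D\nabla \pi), \qquad \partial_t \hat\pi = -\nabla\cdot(\hat{\boldsymbol{f}}_t \hat\pi) + \tfrac12 \nabla\cdot(D\nabla \hat\pi),
\end{equation*}
with $D = GQG^\top$ constant. Differentiating (\ref{eqn:rel_entropy}) under the integral gives
\begin{equation*}
\frac{d}{dt}R(\pi\|\hat\pi) = \int \partial_t\pi\,(h+1)\,d\boldsymbol{x} - \int \frac{\pi}{\hat\pi}\,\partial_t\hat\pi\,d\boldsymbol{x}.
\end{equation*}
The ``$+1$'' term vanishes because mass is conserved. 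We rewrite each Fokker--Planck equation in flux form: $\partial_t\pi = -\nabla\cdot J$ with $J = \boldsymbol{\phi}_t\pi - \tfrac12 D\nabla\pi$, and similarly $\hat J$ for $\hat\pi$. Integrating by parts, and assuming the boundary terms vanish from sufficient decay at infinity, yields
\begin{equation*}
\frac{d}{dt}R = \int J\cdot\nabla h \,d\boldsymbol{x} - \int \hat J\cdot \nabla(\pi/\hat\pi)\,d\boldsymbol{x}.
\end{equation*}
Next we use $\nabla(\pi/\hat\pi) = (\pi/\hat\pi)\nabla h$. Writing $J = \pi(\boldsymbol{\phi}_t - \tfrac12 D\nabla\ln\pi)$ and $\hat J = \hat\pi(\hat{\boldsymbol{f}}_t - \tfrac12 D\nabla\ln\hat\pi)$, the two terms combine into
\begin{equation*}
\frac{d}{dt}R = \mathbb{E}_{\pi}\!\left[(\boldsymbol{\phi}_t - \hat{\boldsymbol{f}}_t)\cdot\nabla h\right] - \tfrac12\mathbb{E}_{\pi}\!\left[\nabla h^\top D \nabla h\right].
\end{equation*}
The key point is that the diffusion terms collapse to a single quadratic form in $\nabla h$. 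This happens only because the diffusion matrix is the same in both equations, which is the hypothesis of the lemma.

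To bound the cross term, set $\boldsymbol{\delta} = \boldsymbol{\phi}_t - \hat{\boldsymbol{f}}_t$. We write $\boldsymbol{\delta}\cdot\nabla h = (D^{-1/2}\boldsymbol{\delta})\cdot(D^{1/2}\nabla h)$ and apply $\boldsymbol{a}\cdot\boldsymbol{b} \le \tfrac12\|\boldsymbol{a}\|^2 + \tfrac12\|\boldsymbol{b}\|^2$. Then $\tfrac12\mathbb{E}_\pi[\nabla h^\top D\nabla h]$ cancels exactly against the dissipation term, leaving
\begin{equation*}
\frac{d}{dt}R(\pi_t\|\hat\pi_t) \le \tfrac12\mathbb{E}_{\pi_t}\!\left[\|D^{-1/2}(\boldsymbol{\phi}_t - \hat{\boldsymbol{f}}_t)\|_2^2\right],
\end{equation*}
which is the claim of Lemma \ref{lem:kld_rate}.

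This step requires $D$ to be invertible. In the mechanical setting $G$ may only act on the velocity components, making $D$ singular. In that case we would interpret $D^{-1/2}$ as a pseudo-inverse and assume the drift mismatch lies in the range of $D$. That assumption holds here, because $\boldsymbol{\phi}_t - \hat{\boldsymbol{f}}_t$ is nonzero only in the velocity rows, since the position rows of both drifts equal $\boldsymbol{v}_t$.

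The main obstacle is justifying the formal manipulations rather than the algebra. We need to differentiate under the integral, and the boundary terms in the integrations by parts must vanish. Both require regularity and decay of $\pi_t$ and $\hat\pi_t$, for instance smooth positive densities with $\hat\pi_t$ Gaussian and $\pi_t$ having finite second moments. We would state these as standing assumptions. Alternatively, a more rigorous route is to invoke Girsanov's theorem on path space: the relative entropy of the path measures equals $\tfrac12\int\mathbb{E}\|D^{-1/2}\boldsymbol{\delta}\|^2 dt$, and the data-processing inequality bounds the marginal relative entropy, giving the integrated form of the lemma directly.
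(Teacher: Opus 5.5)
Your proposal is correct and follows essentially the same route as the paper's proof: differentiate under the integral, substitute both Fokker--Planck equations, integrate by parts to reach $\mathbb{E}_{\pi_t}[(\boldsymbol{\phi}_t - \hat{\boldsymbol{f}}_t)\cdot\nabla\ln(\pi_t/\hat{\pi}_t)] - \tfrac{1}{2}\mathbb{E}_{\pi_t}[\nabla\ln(\pi_t/\hat{\pi}_t)^\top D\,\nabla\ln(\pi_t/\hat{\pi}_t)]$, and cancel the dissipation term with Young's inequality split through $D^{-1/2}$ and $D^{1/2}$. Your bookkeeping is cleaner than the paper's displayed intermediate step, since you keep the $\pi_t$ weight on the dissipation term, which the exact cancellation requires. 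Your remark that a singular $D$ is harmless, because the drift mismatch lies only in the velocity rows, is also a useful addition.
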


\textit{Proof of Lemma \ref{lem:kld_rate}:} This proof is similar and follows closely with that provided in  \cite{Mou_2022}. Note that in the above, and from here on, we drop the dependence of $\boldsymbol{x}_t$ and $\boldsymbol{u}_t$ in the drift functions for conciseness, but it should be understood that dependence is implied. 

First, observe that 
\begin{flalign}
    \frac{\partial}{\partial t}\bigg(\pi_t \ln \frac{\pi_t}{\hat{\pi}_t} \bigg) &= \pi_t \bigg(\frac{\partial \ln \pi_t}{\partial t}(\ln \pi_t + 1 - \ln \hat{\pi}_t)\nonumber\\ 
    &\hspace{6em}- \frac{\partial \ln \hat{\pi}_t}{\partial t} \bigg).\label{eqn:proof_2_1}
\end{flalign}
Under mild assumptions, i.e., dominated convergence of $\pi_t\ln(\pi_t/\hat{\pi}_t)$, we can differentiate (\ref{eqn:rel_entropy}) by switching the order of integration and differentiation. Then, substituting (\ref{eqn:proof_2_1}), 
\begin{flalign}
    \frac{d}{dt} R(\pi_t\| \hat{\pi}_t) &= \int_{\mathcal{X}} \frac{\partial \pi_t}{\partial t}(\ln \pi_t + 1 - \ln \hat{\pi}_t) d\boldsymbol{x}_t \\
    & \hspace{4em}- \int_{\mathcal{X}} \frac{\partial \hat{\pi}_t}{\partial t} \frac{\pi_t}{\hat{\pi}_t} d\boldsymbol{x}_t. \label{eqn:proof_2_2}
\end{flalign}
The time evolution of $\pi_t$ is given by the Fokker-Planck equation,
\begin{equation}
    \frac{\partial \pi_t}{\partial t} = -\nabla_{\boldsymbol{x}_t} \cdot( \pi_t \boldsymbol{\phi}_t) + \frac{1}{2} \nabla_{\boldsymbol{x}_t} \cdot(D\nabla_{\boldsymbol{x}_t} \pi_t),
    \label{eqn:proof_2_3}
\end{equation}
and similarly for $\hat{\pi}_t$ by exchanging the $\boldsymbol{\phi}_t$ with $\hat{\boldsymbol{f}}_t$. By substituting in (\ref{eqn:proof_2_3}) and applying the divergence theorem, the first term in (\ref{eqn:proof_2_2}) is 
\begin{equation}
    - \int_{\mathcal{X}} \pi_t\bigg(-\boldsymbol{\phi}_t +  \frac{1}{2} D \nabla_{\boldsymbol{x}_t} \ln \pi_t\bigg)\cdot(\nabla_{\boldsymbol{x}_t}\ln \pi_t - \nabla_{\boldsymbol{x}_t} \ln \hat{\pi}_t) d\boldsymbol{x}_t,
\end{equation}
and the second is 
\begin{equation}
    \int_{\mathcal{X}} \pi_t\bigg(-\hat{\boldsymbol{f}} + \frac{1}{2}D\nabla_{\boldsymbol{x}_t}\hat{\pi}_t\bigg)\cdot(\nabla_{\boldsymbol{x}_t} \ln \pi_t - \nabla_{\boldsymbol{x}_t}\hat{\pi}_t)d\boldsymbol{x}_t.
\end{equation}
Combining these two terms and reducing,
\begin{flalign}
    \frac{d}{dt} R(\pi_t\| \hat{\pi}_t) &= \int_{\mathcal{X}} \pi_t(\boldsymbol{\phi}_t - \hat{\boldsymbol{f}}_t) \cdot (\nabla_{\boldsymbol{x}_t} \ln \pi_t - \nabla_{\boldsymbol{x}_t} \ln \hat{\pi}_t) d \boldsymbol{x}_t\nonumber\\
    & \hspace{2em} - \frac{1}{2} \int_{\mathcal{X}} \|D^{1/2}(\nabla_{\boldsymbol{x}_t} \ln \pi_t - \nabla_{\boldsymbol{x}_t} \hat{\pi}_t)\|_2^2 d\boldsymbol{x}_t.
\end{flalign}
By Young's inequality, i.e., $\boldsymbol{a}\cdot \boldsymbol{b} \leq \frac{1}{2}(\|\boldsymbol{a}\|_2^2 + \|\boldsymbol{b}\|_2^2)$, it is straightforward to show that
\begin{flalign}
    \frac{d}{dt} R(\pi_t\|\hat{\pi}_t) &\leq \frac{1}{2}\int_{\mathcal{X}} \pi_t\|D^{-1/2}(\boldsymbol{\phi}_t - \hat{\boldsymbol{f}}_t)\|_2^2 d\boldsymbol{x}_t\nonumber\\
    &=\frac{1}{2}\mathbb{E}_{\pi_t}\left[\|D^{-1/2}(\boldsymbol{\phi}_t - \hat{\boldsymbol{f}}_t)\|_2^2 \right].\label{eqn:proof_2_final}
\end{flalign}
\hfill $\square$

Notice that the expression in (\ref{eqn:proof_2_final}) still requires the expectation under the true law, $\pi_t$, which is unknown. We can remedy this by bounding the discrepancy in the laws' expected values by again invoking Theorem \ref{theorem:DV_formula}. For conciseness, let $g_t = \|D^{-1/2}(\boldsymbol{\phi}_t - \hat{\boldsymbol{f}}_t)\|_2^2$. The discrepancy has the upper bound
\begin{flalign}
    \mathbb{E}_{\pi_t}[g_t] - \mathbb{E}_{\hat{\pi}_t}[g_t] &\leq \frac{1}{\lambda}\big\{\ln\left(\mathbb{E}_{\hat{\pi}_t}\left[e^{\lambda g_t} \right] \right) + R(\pi_t\|\hat{\pi}_t) \nonumber\\ \nonumber
    &\hspace{6em}-\lambda \mathbb{E}_{\hat{\pi}}[g_t] \big\}\\
    & \hspace{-3em} = \frac{1}{\lambda}\left\{\ln\left(\mathbb{E}_{\hat{\pi}_t}\left[e^{\lambda(g_t - \mathbb{E}_{\hat{\pi}}[g_t])}\right] \right) + R(\pi_t\|\hat{\pi}_t) \right\} 
\end{flalign}
for some $\lambda>0$. Using Hoeffding's Lemma \cite{Hoeffding_1963}, i.e,  for a distribution $q$, and the random variable $a \leq X \leq b$, 
\begin{equation}
    \mathbb{E}_{q}\left[e^{\lambda(X - \mathbb{E}_q[X]}\right] \leq e^{\frac{\lambda^2(b - a)^2}{8}},
\end{equation}
it follows that 
\begin{equation}
    \mathbb{E}_{\pi_t}[g_t] - \mathbb{E}_{\hat{\pi}_t}[g_t] \leq \frac{1}{8}\lambda(\overline{g_t} - \underline{g_t})^2 + \frac{1}{\lambda}R(\pi_t\| \hat{\pi}_t)\label{eqn:descripancy_difference},
\end{equation}
where $\overline{g_t}$ and $\underline{g_t}$ are defined upper and lower bound values of $g_t$, respectively. The minimizer of the right-hand side of (\ref{eqn:descripancy_difference}) is given by 
\begin{equation}
    \lambda^* = \sqrt{\frac{8R(\pi_t\|\hat{\pi}_t)}{(\overline{g_t} - \underline{g_t})^2}}.
\end{equation}
Putting the pieces together, the tightest upper bound for the relative entropy growth rate is 
\begin{flalign}
    \frac{d}{dt} R(\pi_t\|\hat{\pi}_t) &\leq \frac{1}{2}\mathbb{E}_{\pi_t}[g_t]\nonumber\\
    &= \frac{1}{2}\left(\mathbb{E}_{\hat{\pi}_t}[g_t] + (\mathbb{E}_{\pi_t}[g_t] - \mathbb{E}_{\hat{\pi}_t}[g_t]) \right)\nonumber\\
    &\leq \frac{1}{2}\left(\mathbb{E}_{\hat{\pi}_t}[g_t] + \frac{\overline{g_t} - \underline{g_t}}{\sqrt{2}} \sqrt{R(\pi_t\|\hat{\pi}_t)} \right).
    \label{eqn:tightest_upper_bound}
\end{flalign}
Note that in general, there does not exist a finite upper bound $\overline{g}_t$. In practical settings, one should use a suitable heuristic to determine $\overline{g}_t$ that is informed by the problem at hand. The following will introduce one such heuristic that we choose to use in this study, but Eq. \ref{eqn:tightest_upper_bound} is not limited to this choice.  

\subsection{Implications of a Gaussian Reference}

In the present study, we assume that the model $\boldsymbol{f}_t(\cdot) = \boldsymbol{\phi}_t(\cdot)$, meaning we have perfect knowledge of our dynamic model's parameters. Planned work intends to consider cases where we may know the parameters $\boldsymbol{\theta}$ probabilistically or within some bounds. Here, however, ambiguity in the distribution $\pi_t$ is solely due to the aforementioned structural approximations -- initial Gaussianity of the reference distribution and linearization of the dynamics in $\hat{\boldsymbol{f}}_t$. Furthermore, we assume that linearization errors can be approximately bounded through a quadratic Taylor remainder, centered at the reference distribution's mean: 
\begin{flalign}
    \boldsymbol{f}_t - \hat{\boldsymbol{f}}_t &= \frac{1}{2} \delta\boldsymbol{x_t}^\top \nabla_{\boldsymbol{x}_t}^2 \boldsymbol{f}_t(\boldsymbol{\mu}_t) \delta\boldsymbol{x}_t + \mathcal{O}(\|\delta\boldsymbol{x}_t\|^3)\nonumber\\
    & \simeq \frac{1}{2}\left[\delta\boldsymbol{x}_t^\top H_t^1 \delta\boldsymbol{x}_t, \cdots, \delta\boldsymbol{x}_t^\top H_t^n \delta\boldsymbol{x}_t\right]^\top.
\end{flalign}
In the above, $ \delta \boldsymbol{x}_t = \boldsymbol{x}_t - \boldsymbol{\mu}_t$ and the symmetric Hessian matrix $H_t^i = \nabla^2_{\boldsymbol{x}_t} f_{t}^i(\boldsymbol{\mu}_t)$, where $f_{t}^i$ is the $i^{\text{th}}$ component of the nonlinear drift model vector $\boldsymbol{f}_t$. Furthermore, if we assume the process noise covariance is spherical, i.e., $D = \sigma^2I_{n\times n}$, then
\begin{flalign}
    \mathbb{E}_{\hat{\pi}_t}[g_t] &= \mathbb{E}_{\hat{\pi}_t}\left[\|D^{-1/2}(\boldsymbol{f}_t - \hat{\boldsymbol{f}}_t)\|_2^2 \right]\nonumber\\
    &= \frac{1}{4\sigma^2} \mathbb{E}_{\hat{\pi}_t}\left[\sum_{i = 1}^n \left(\delta\boldsymbol{x}_t^\top H_t^i \delta\boldsymbol{x}_t\right)^2 \right] \nonumber \\
    & = \frac{1}{4\sigma^2} \sum_{i = 1}^n \mathbb{E}_{\hat{\pi}_t}[(\delta \boldsymbol{x}_t^\top H_t^i \delta \boldsymbol{x}_t)^2].
    \label{eqn:reduce_expectation}
\end{flalign}

\begin{lemma}
    Let $S \in \mathbb{R}^{n\times n}$ and $T \in \mathbb{R}^{n\times n}$ be a symmetric matrices and $\boldsymbol{\xi} \sim \mathcal{N}(\boldsymbol{0}_{n\times 1}, P)$, where $P\in \mathbb{R}^{n\times n}$ is a positive definite covariance matrix, then
    \begin{equation}
        \mathbb{E}[(\boldsymbol{\xi}^\top S \boldsymbol{\xi})(\boldsymbol{\xi}^\top B \boldsymbol{\xi})] = \text{tr}(SP)\,\text{tr}(TP) + 2\,\text{tr}(SPTP),
    \end{equation}
    and
    \begin{equation}
        \mathbb{E}[(\boldsymbol{\xi}^\top S \boldsymbol{\xi})^2] = \text{tr}(SP)^2 + 2\,\text{tr}\left((SP)^2\right).
    \end{equation}
    \label{lem:trace_expression}
\end{lemma}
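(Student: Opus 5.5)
Note first that the statement contains a typo: the first identity should read $\mathbb{E}[(\boldsymbol{\xi}^\top S \boldsymbol{\xi})(\boldsymbol{\xi}^\top T \boldsymbol{\xi})]$, with $T$ in place of $B$. The second identity is then the special case $T = S$, since $\text{tr}(SPSP) = \text{tr}((SP)^2)$. So I will prove only the mixed identity.

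\emph{Reduction to a standard normal.} Since $P$ is positive definite, write $\boldsymbol{\xi} = P^{1/2}\boldsymbol{z}$ with $\boldsymbol{z} \sim \mathcal{N}(\boldsymbol{0}_{n\times 1}, I_{n\times n})$. Set $\tilde S = P^{1/2} S P^{1/2}$ and $\tilde T = P^{1/2} T P^{1/2}$; both are symmetric. Then $\boldsymbol{\xi}^\top S\boldsymbol{\xi} = \boldsymbol{z}^\top \tilde S \boldsymbol{z}$, and cyclicity of the trace gives
\begin{equation*}
\text{tr}(\tilde S) = \text{tr}(SP), \qquad \text{tr}(\tilde S\tilde T) = \text{tr}(SPTP).
\end{equation*}
It therefore suffices to show $\mathbb{E}[(\boldsymbol{z}^\top \tilde S \boldsymbol{z})(\boldsymbol{z}^\top \tilde T \boldsymbol{z})] = \text{tr}(\tilde S)\,\text{tr}(\tilde T) + 2\,\text{tr}(\tilde S \tilde T)$.

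\emph{Fourth moments (Isserlis/Wick).} Expand in coordinates:
\begin{equation*}
\mathbb{E}[(\boldsymbol{z}^\top \tilde S \boldsymbol{z})(\boldsymbol{z}^\top \tilde T \boldsymbol{z})] = \sum_{i,j,k,l} \tilde S_{ij}\tilde T_{kl}\,\mathbb{E}[z_i z_j z_k z_l].
\end{equation*}
For independent standard normals, $\mathbb{E}[z_i z_j z_k z_l] = \delta_{ij}\delta_{kl} + \delta_{ik}\delta_{jl} + \delta_{il}\delta_{jk}$. This can be checked case by case: it uses $\mathbb{E}[z^4]=3$, $\mathbb{E}[z^2]=1$, and the vanishing of odd moments. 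Substituting, the three pairings contribute $\sum_{i,k}\tilde S_{ii}\tilde T_{kk} = \text{tr}(\tilde S)\,\text{tr}(\tilde T)$, then $\sum_{i,j}\tilde S_{ij}\tilde T_{ij}$, and then $\sum_{i,j}\tilde S_{ij}\tilde T_{ji}$. By symmetry of $\tilde T$, the last two are both equal to $\text{tr}(\tilde S\tilde T)$, which yields the claimed formula.

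\emph{Main obstacle.} The only delicate point is the fourth-moment formula and its bookkeeping: the diagonal case $i=j=k=l$ must give $3$, and all three delta terms equal $1$ there, so the formula is consistent. The symmetry of $S$ and $T$ is exactly what merges the two cross pairings into the single term $2\,\text{tr}(SPTP)$.

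As an alternative route, diagonalize $\tilde S$ for the second identity. Writing $\boldsymbol{z}^\top \tilde S \boldsymbol{z} = \sum_i \lambda_i w_i^2$ with independent standard normal $w_i$, the result is $\left(\sum_i\lambda_i\right)^2 + 2\sum_i\lambda_i^2$. The mixed case then follows by polarization: apply the squared formula to $S+T$ and to $S-T$ and take one quarter of the difference.
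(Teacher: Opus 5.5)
Your proof is correct, and you are right that the $B$ in the first identity should be $T$. The paper gives no argument of its own; it defers entirely to Theorem 4.2 of \cite{Magnus_1979}, where the identity comes out of a general treatment of moments of products of quadratic forms in Gaussian vectors.

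Your route is a short, self-contained verification: whiten with $\boldsymbol{\xi} = P^{1/2}\boldsymbol{z}$, apply Isserlis' formula $\mathbb{E}[z_iz_jz_kz_l] = \delta_{ij}\delta_{kl}+\delta_{ik}\delta_{jl}+\delta_{il}\delta_{jk}$, then use cyclicity of the trace. It shows exactly where the symmetry of $S$ and $T$ enters, namely in merging the two cross pairings into $2\,\text{tr}(SPTP)$. The standard matrix-algebra derivation does the same index computation in compact form, as $\mathbb{E}[\boldsymbol{z}\boldsymbol{z}^\top\otimes\boldsymbol{z}\boldsymbol{z}^\top] = I_{n^2} + K + \operatorname{vec}(I_n)\operatorname{vec}(I_n)^\top$ with $K$ the commutation matrix. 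What the general reference buys is systematic access to higher-order products of quadratic forms, which this paper does not need.

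Two minor simplifications are available. First, the whitening step is optional, since Isserlis applies directly to $\boldsymbol{\xi}$ with $\mathbb{E}[\xi_i\xi_j\xi_k\xi_l] = P_{ij}P_{kl}+P_{ik}P_{jl}+P_{il}P_{jk}$. Second, your argument never uses $P \succ 0$, so the lemma holds for any positive semidefinite $P$. Your diagonalization-plus-polarization alternative is also valid.
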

For a proof of Lemma \ref{lem:trace_expression} see Theorem 4.2 in \cite{Magnus_1979}. Using this lemma with (\ref{eqn:reduce_expectation}), we find that
\begin{equation}
    \mathbb{E}_{\hat{\pi}_t}[g_t] = \frac{1}{4\sigma^2}\sum_{i = 1}^n \left[ \text{tr}(H^i_t P_t)^2 + 2\,\text{tr}\left((H_t^iP_t)^2 \right) \right].\label{eqn:guassian_approx_KLD_rate}
\end{equation}

The lower bound $\underline{g_t}$ is trivially zero. Determining the upper bound $\overline{g_t}$, however, is problem dependent. In general, there does not exist a finite upper bound on the approximation truncation error terms $\delta \boldsymbol{x}_t^\top H^i_t \delta \boldsymbol{x}_t$, and the distribution of $(\delta \boldsymbol{x}_t^\top H^i_t  \delta \boldsymbol{x}_t)^2$ is not sub-Gaussian or sub-exponential, so care must be taken in choosing $\overline{g_t}$ to properly ensure that it bounds $g_t$ to a reasonable degree of certainty. In the present study, we make a simple assumption and set $\overline{g_t} = (1 + \gamma) \mathbb{E}_{\hat{\pi}_t}[g_t] $, where $\gamma > 1$. Future work intends to find a formal approach to determine $\overline{g_t}$.

Putting the pieces together, we find the upper bound for the relative entropy growth rate 
\begin{flalign}
    \frac{d}{dt} R(\pi_t\| \hat{\pi}_t) &\leq \frac{1}{2} \mathbb{E}_{\hat{\pi}_t}[g_t]\left(1 + \frac{1 + \gamma}{\sqrt{2}} 
    \sqrt{R(\pi_t\|\hat{\pi}_t)} \right)\nonumber\\
    &\hspace{-2em} = \frac{1}{8\sigma^2}\sum_{i = 1}^n\left[\operatorname{tr}(H_t^i P_t)^2 + 2\operatorname{tr}\left((H_t^iP_t)^2\right) \right]\nonumber\\
    & \cdot \left(1 + \frac{1 + \gamma}{\sqrt{2}} 
    \sqrt{R(\pi_t\|\hat{\pi}_t)} \right)
    \label{eqn:rate_final_form}
\end{flalign}
The right-hand side in (\ref{eqn:rate_final_form}) is independent of knowing the true distribution $\pi_t$, and requires only the mean and covariance of $\hat{\pi}_t$ and a running evaluation of $R(\pi_t\|\hat{\pi}_t)$, or, because of (\ref{eqn:rate_final_form}) being strictly positive, a running upper bound provided by integrating the expression itself.

\section{NUMERICAL IMPLEMENTATION}
In the present study, we consider the linearized covariance steering problem as a use case to evaluate our contributions. The problem is widely found in the stochastic optimization literature (e.g., \cite{Okamoto_2018, Ridderhof_2022}), so here we only briefly summarize. We seek to find a control feedback parameterized by
\begin{equation}
    \delta\boldsymbol{u}_t = K_t\delta\boldsymbol{x}_t.
    \label{eqn:linear_feedback}
\end{equation}
The continuous-time linearized state deviation flow is then
\begin{flalign}
    \delta \dot{\boldsymbol{x}}_t &= A_t\delta\boldsymbol{x}_t + B\delta \boldsymbol{u_t} + G\boldsymbol{w}_t\\
    &=(A_t + BK_t)\delta\boldsymbol{x}_t + G\boldsymbol{w}_t.
\end{flalign}
For implementation in a convex static optimization problem, we use a \ac{foh} interpolation scheme so the discrete time state deviation transition and its associated covariance between time steps $t_k$ and $t_{k + 1}$ are 
\begin{equation}
    \delta\boldsymbol{x}_{k + 1} = (I - B_k^+ K_{k + 1})^{-1}[(A_k + B_k^- K_k)\delta\boldsymbol{x}_k + \boldsymbol{w}_k]
\end{equation}
and
\begin{flalign}
&P_{k+1} - P_{k+1} K_{k+1}^\top B_k^{+\top} - B_k^+ K_{k+1} P_{k+1}\nonumber\\
& \hspace{3em}+ B_k^+ K_{k+1} P_{k+1} K_{k+1}^\top B_k^{+\top} \nonumber \\
&\hspace{1em}= A_k P_k A_k^\top + A_k P_k K_k^\top B_k^{-\top} \nonumber \\
&\hspace{3em}+ B_k^- K_k P_k A_k + B_k^- K_k P_k K_k^\top B_k^{-\top} + Q_k. 
\label{eqn:covariance_evolution}
\end{flalign}
The matrices in the above are 
\begin{flalign}
     A_k &= \Phi(t_{k + 1}, t_k)\mbox{,}\\
     B_k^{-} &= A_k\int_{t_k}^{t_{k + 1}} \frac{t_{k + 1} - \tau}{t_{k + 1} - t_k} \Phi^{-1}(\tau, t_k)
     B d\tau \mbox{,}\\
     B_k^{+} &= A_k\int_{t_k}^{t_{k + 1}}\frac{\tau - t_k}{t_{k + 1} - t_k} \Phi^{-1}(\tau, t_k) B d\tau,\\
     Q_{k} &= \int_{t_k}^{t_{k + 1}} \Phi(t_{k + 1}, \tau) GG^\top \Phi^{\top}(t_{k + 1}, \tau) d\tau. 
\end{flalign}
The matrix $\Phi(t_{k + 1}, t_k)$ is the state transition matrix between time steps $k$ and $k + 1$. 

Covariance dynamics constraints by enforcing (\ref{eqn:covariance_evolution}) are nonconvex in the decision variables $K_k$ and $P_k$. Following \cite{Rataczak_2025}, we use a convex relaxation of these constraints through the transformations 
\begin{flalign}
    U_k &= K_kP_k,\\
    Y_k &= U_k P_k^{-1}U_k^\top.
\end{flalign}
An equivalent convex state covariance transition constraint is then 
\begin{flalign}
P_{k+1} &= A_k P_k A_k^\top + A_k U_k^\top B_k^{-\top}\nonumber\\
&+ B_k^- U_k A_k^\top + B_k^- Y_k B_k^{-\top}
+ U_{k+1}^\top B_k^{+\top}\nonumber\\
&+ B_k^+ U_{k+1} - B_k^+ Y_{k+1} B_k^{+\top} + Q_k,\label{eqn:convex_covariance_evolution}
\end{flalign}
with the added matrix inequality constraint,
\begin{equation}
    \begin{bmatrix}
    P_k & U_k^\top\\
    U_k & Y_k
    \end{bmatrix} \succeq 0 \label{eqn:psd_constraint}.
\end{equation}
Determining the optimal feedback control gain is found by minimizing
\begin{flalign}
    J =  \sum_{k = 1}^{N - 1}\frac{\Delta t_k}{2}\left(\operatorname{tr}(Y_k) + \operatorname{tr}(Y_{k + 1}) \right) 
\end{flalign}
subject to (\ref{eqn:convex_covariance_evolution} - \ref{eqn:psd_constraint}) and initial and terminal constraints on the covariance, i.e., $P_0 = P_0^*$, $P_N = P_N^*$. Here $\Delta t_k = t_{k + 1} - t_k$ and $N$ is the number of discretization nodes. The gains are recovered in post-processing by $K_k = U_kP_k^{-1}$.

\subsection{Discrete Time Approximation of the Relative Entropy Upper Bound}

We use a first-order Euler method to numerically integrate the relative entropy bound. This is given by
\begin{equation}
    \overline{R}_{k + 1} = \overline{R}_k + \Delta t_k \overline{\dot{R}}_k,
\end{equation}
where $\overline{R}_k$ is the integrated upper bound at time step $t_k$ and $\overline{\dot{R}}_k$ is the discrete time rate. Using the rate function found by (\ref{eqn:rate_final_form}),
\begin{flalign}
    \overline{\dot{R}_k} &= \frac{1}{8\sigma^{2}}\sum_{i = 1}^n\left[\operatorname{tr}(H_k^i P_k)^2 + 2\operatorname{tr}\left((H_k^iP_k)^2\right) \right]\nonumber\\
    & \cdot \left(1 + \frac{1 + \gamma}{\sqrt{2}} 
    \sqrt{\overline{R}_k} \right).
    \label{eqn:rate_final_form_discrete}
\end{flalign}
In the above, $ H_k^i = \nabla^2\boldsymbol{f}_t(\boldsymbol{\mu}_k)$, where $\boldsymbol{\mu}_k$ is the nominal mean of our Gaussian reference distribution at time step $t_k$.

\section{RESULTS}
To assess our methods, we consider a nonlinear stochastic guidance problem for a spacecraft operating in the Earth-Moon dynamical region. Our main objective presently is to quantify the differences in the risk allocated by assuming a nominal Gaussian distribution and the distributionally robust risk upper bound proposed. 

\subsection{Equations of Motion}

The drift dynamics $\boldsymbol{f}(\cdot)$ are modeled with the \ac{crtbp} that has the equations of motion
\begin{flalign}
    \ddot{x} &= 2\dot{y} + x - (1 - \mu)\frac{x + \mu}{r_1^3} - \mu \frac{x + \mu - 1}{r_2^3},\nonumber\\
    \ddot{y} &= -2\dot{x} + y - (1 - \mu)\frac{y}{r_1^3} - \mu \frac{y}{r_2^3},\nonumber\\
    \ddot{z} &= -(1 - \mu)\frac{z}{r_1^3} - \mu \frac{z}{r_2^3}.
    \label{eqn:CRTBP_eom}%
\end{flalign}
The quantities $r_1$ and $r_2$ are defined as
\begin{flalign}
    r_1^2 &= (x + \mu)^2 + y^2 + z^2, \nonumber\\
    r_2^2 &= (x + \mu - 1)^2 + y^2 + z^2.
\end{flalign}
\label{eqn:CRTBP_eom}%
The parameter $\mu = \frac{m_2}{m_1 + m_2}$, where $m_1$ is the mass of the Earth, and $m_2$ is the mass of the Moon. Additive diffusion is added to the drift by sampling $d\boldsymbol{w}_t \sim \mathcal{N}(0, \sigma^2I_{3\times 3} dt)$, and the choice of $\sigma$ is described further below. 

\subsection{Scenario Description}

\begin{table}[t!]
\caption{Scenario Parameters}
\label{tab:scenario_parameters}
\centering
\begin{tabular}{ccc}
\hline
Parameter & Symbol \& Units & Value \\ \hline
Gravitational Parameter & $\mu$ & 0.012151\\
Number of Nodes & $N$ & 150\\ 
Number of Monte Carlo Trials & $N^{\text{MC}}$ & 4000\\
Diffusion Magnitude & $\sigma^{(1)}$ [$\text{km}/\text{s}^{3/2}$] & $5\times10^{-6}$\\
-- -- & $\sigma^{(2)}$ [$\text{km}/\text{s}^{3/2}$] & $1\times 10^{-6}$ \\
Initial Position Uncertainty  & $\sigma^*_r$ [km] & 600\\
Initial Velocity Uncertainty & $\sigma^*_v$[km/s] & $6\times10^{-3}$\\
Targeted Position Uncertainty & $\sigma^*_r$ [km] & 600\\
Targeted Velocity Uncertainty & $\sigma^*_v$[km/s] & $6\times10^{-3}$\\
Scenario Period & $T$ [TU] & 3.7247\\
Keep-Out Radius & $r_{\text{min}}$ [km] & 100.0
\\ \hline
\end{tabular}
\end{table}

%
\begin{figure}[htb]
    \centering
    \includegraphics[width=0.4\textwidth]{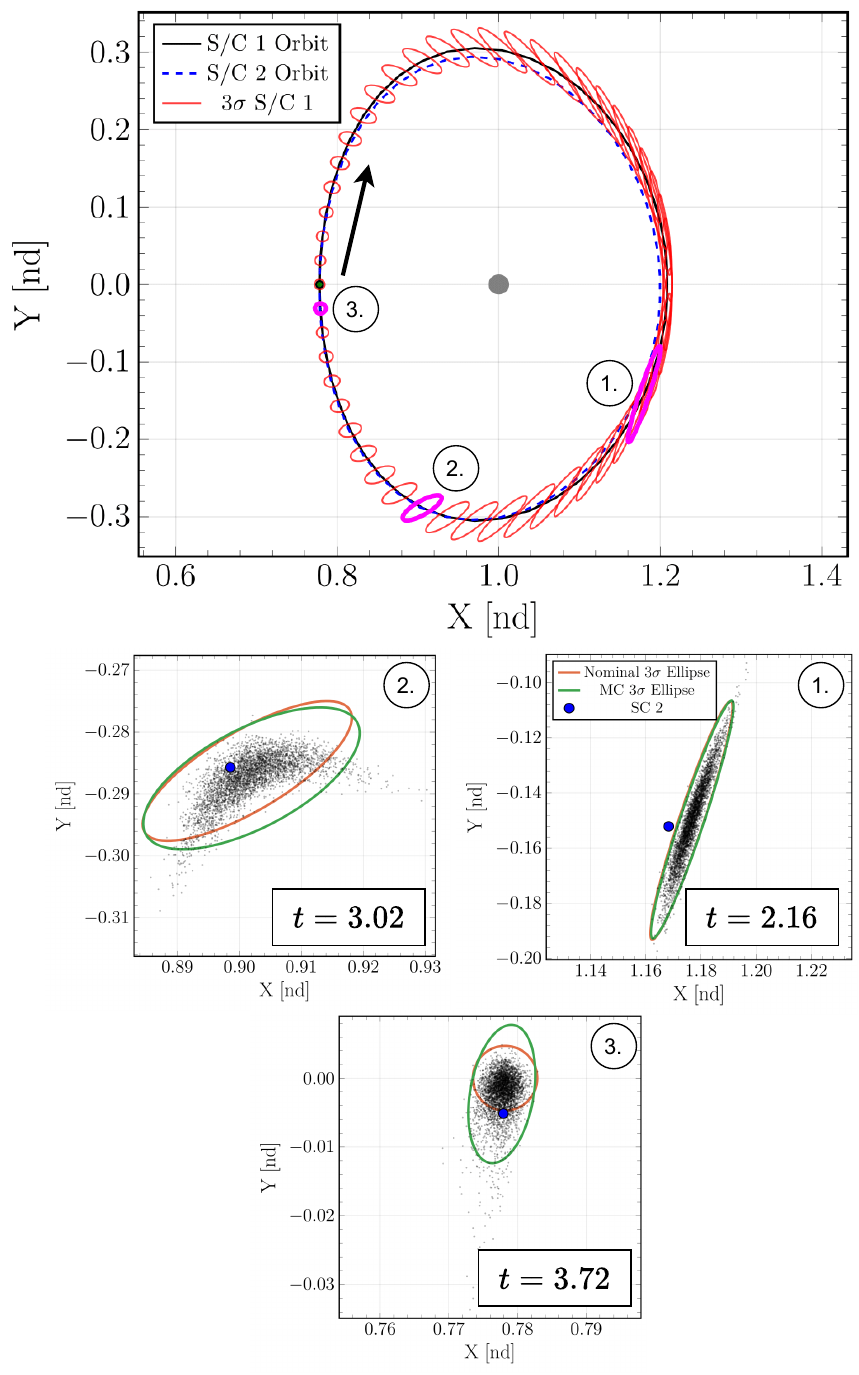}
    \caption{The top panel shows the steered nominal covariance evolution for one period of the reference orbit. The black line is the reference mean, and the blue is the orbit of a chaser spacecraft that poses a risk of collision. Three different time instances are emphasized in the bottom panels. A Monte Carlo evaluates the validity of the steered Gaussian reference.}\label{fig:orbits_and_MC}
\end{figure}
In this study's scenario, the spacecraft's initial distribution is prescribed by $\hat{\pi}_0 = \mathcal{N}(\boldsymbol{\mu}_0, P_0)$ where $\boldsymbol{\mu}_0 = [7.7819\times10^{-1},\; 0,\; 0,\; 0,\; 5.5593\times10^{-1},\; 0]^\top$ and $P_0$ is diagonal covariance with the uncertainty values provided in Table \ref{tab:scenario_parameters}. The guidance policy steers the terminal reference distribution to $\hat{\pi}_0$ after one period of the reference mean's orbit. A nominal risk is determined by the chance that the spacecraft's position is within a $r_{\text{min}}$ of a chaser spacecraft, i.e., $\mathbb{P}_{\boldsymbol{x}_t\sim\hat{\pi}_t}[\|\boldsymbol{\mu}_{r, t} - \boldsymbol{r}_{c, t}\|_2 \leq r_{\text{min}}]$. We find that diffusion plays a strong influence on the magnitude of the relative entropy, and thus the risk upper bound, so we evaluate our results using two different diffusion magnitude values (i.e, $\sigma$). Relevant scenario parameters are provided in Table \ref{tab:scenario_parameters}.  
%
\begin{figure*}[htb!]
    \centering
    \begin{subfigure}{0.35\textwidth}
        \centering
        \includegraphics[width=\linewidth]{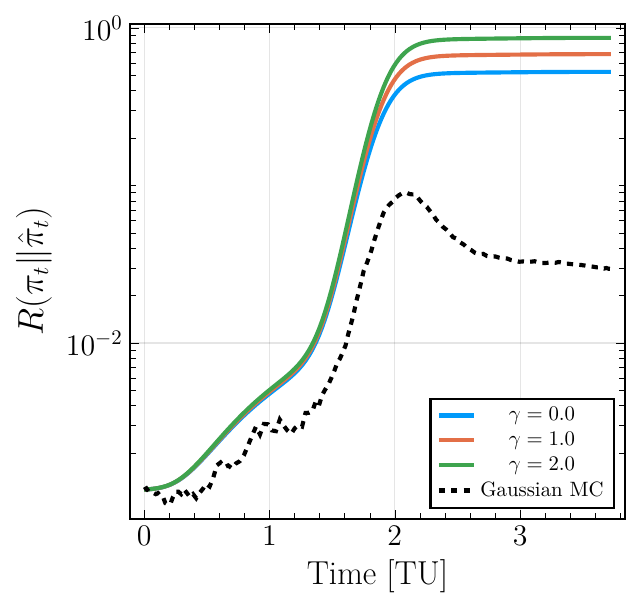} %
        \caption{}
    \end{subfigure}
    \hspace{1ex}
    \begin{subfigure}{0.35\textwidth}
        \centering
        \includegraphics[width=\linewidth]{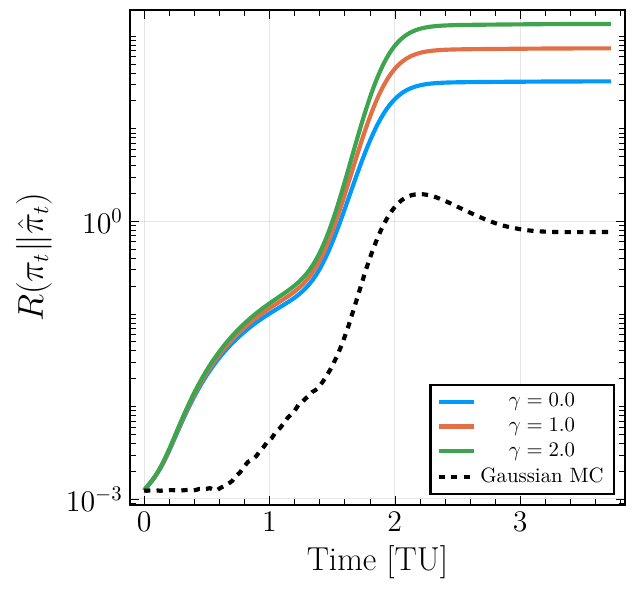} %
        \caption{}
    \end{subfigure}
    \caption{The true $R(\pi_t^{MC}\| \hat{\pi}_t)$ and its upper bound, derived in the present study, as a function of time for the scenario provided. The left- and right-hand sides correspond to $\sigma = 5 \times 10^{-6} \text{ km}/\text{s}^{3/2}$ and $\sigma = 1 \times 10^{-6} \text{ km}/\text{s}^{3/2}$, respectively.}
    \label{fig:kld}
\end{figure*}
Figure \ref{fig:orbits_and_MC} illustrates the scenario in the present study. The top panel shows the evolution of the $3-$sigma Gaussian reference covariance ellipse, pushed through the optimized feedback-controlled dynamics. This particular simulation uses a diffusion magnitude $\sigma = 1\times10^{-6} \text{ km}/\text{s}^{3/2}$. The black and blue lines are the nominal reference mean and the chaser spacecraft trajectory. Three different discrete times are emphasized in the bottom panels. A Monte Carlo simulation compares the nonlinear state dispersions with the expected linearized reference covariance evolution. Towards the end of the simulation history, the state dispersions in the Monte Carlo simulation become inconsistent with the reference distribution due to the accumulation of nonlinear errors.

Our relative entropy bound is assessed by comparing it with a truth-surrogate model. This model computes the \ac{kld} between a Gaussian reduction of the Monte Carlo history and our reference distribution. That is, we determine the empirical mean and covariance $\boldsymbol{\mu_k^{\text{MC}}} = \frac{1}{N^{\text{MC}}} \sum_{i = 1}^{N^{\text{MC}}} \boldsymbol{x}_k^i$ and $P_k^{\text{MC}} = \frac{1}{N^{\text{MC}} - 1}\sum_{i = 1}^{N^{\text{MC}}}\left(\boldsymbol{x}_k^i - \boldsymbol{\mu}_k^{\text{MC}}\right)\left(\boldsymbol{x}_k^i - \boldsymbol{\mu}_k^{\text{MC}}\right)^\top,$ where $N^{\text{MC}}$ is the number of Monte Carlo trials. The discrete time-varying relative entropy model is then
\begin{flalign}
    R_k(\pi_k^{MC}\| \hat{\pi}_k) &=\frac{1}{2}\bigg( \log\frac{|P_k|}{|P_k^{\text{MC}}|} + \operatorname{tr}\left(P_k^{-1} P_k^{\text{MC}}\right)\\
    &+ \left(\boldsymbol{\mu}_k - \boldsymbol{\mu}_k^{\text{MC}}\right)^\top P_k^{-1}\left(\boldsymbol{\mu}_k - \boldsymbol{\mu}_k^{\text{MC}} \right) \bigg), 
\end{flalign}
which is the exact \ac{kld} between two multivariate Gaussian distributions \cite{Zhang_2023}. To be precise, this model is not the true \ac{kld} because of the loss in information caused by the Gaussian reduction. However, for this study, it serves as a tractable surrogate quantity for comparison with our upper bound value. Despite this discrepancy, we will refer to this value as the \textit{true} value of the relative entropy for the sake of conciseness. 

We plot both the true \ac{kld} and the upper bound proposed in Figure \ref{fig:kld}. The left- and right-hand panels show this evolution for the $\sigma = 5\times10^{-6} \text{ km}/\text{s}^{3/2}$ and $\sigma = 1\times 10^{-6} \text{ km}/\text{s}^{3/2}$ cases, respectively. At the start of the trial, the Gaussian reference coincides closely with the true distribution, so the true relative entropy value is quite small. However, nonlinearity in the system dynamics progressively impacts the true distribution, especially towards the portion of the trajectory closest to perilune, significantly influencing the evolution of the relative entropy. Importantly, we see that our upper bound is indeed valid for all values of $\gamma$, and correlates well with the true relative entropy history. 

%
\begin{figure*}[htb!]
    \centering
    \begin{subfigure}{0.35\textwidth}
        \centering
        \includegraphics[width=\linewidth]{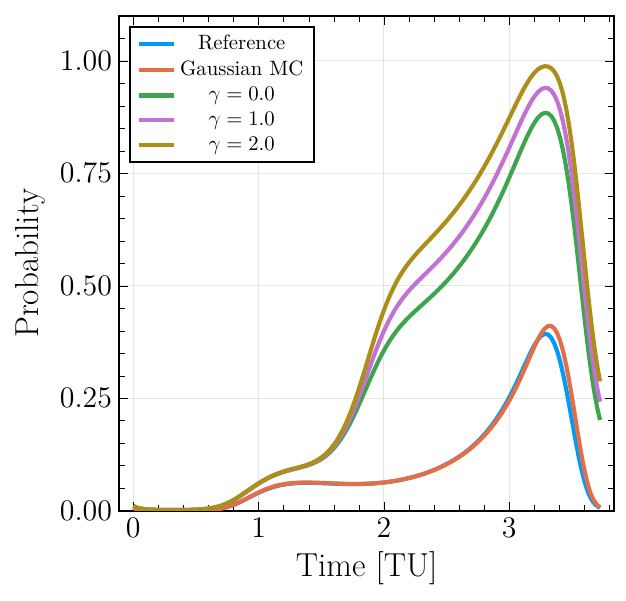} 
        \caption{}
    \end{subfigure}
    \hspace{1ex}
    \begin{subfigure}{0.35\textwidth}
        \centering
        \includegraphics[width=\linewidth]{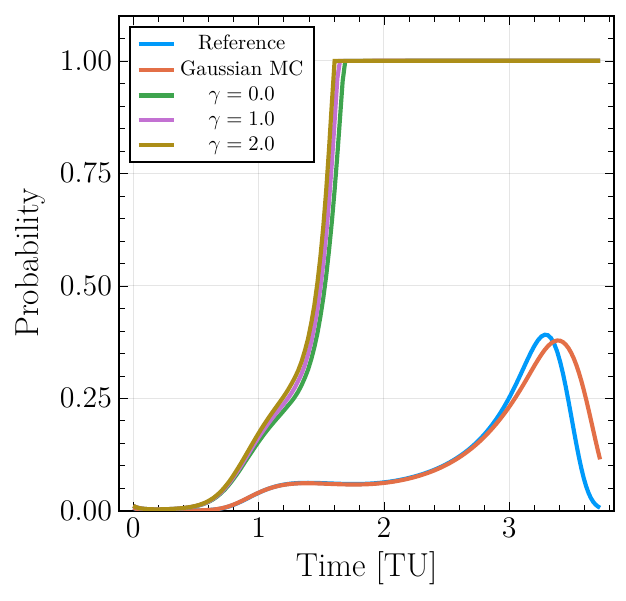} 
        \caption{}
    \end{subfigure}
    \caption{Risk probability evaluated under the nominal reference and true distributions, and the risk upper bound presented here. The left- and right-hand sides correspond to $\sigma = 5 \times 10^{-6} \text{ km}/\text{s}^{3/2}$ and $\sigma = 1 \times 10^{-6} \text{ km}/\text{s}^{3/2}$, respectively.}
    \label{fig:risk}
\end{figure*}

%
\begin{figure}[htb!]
    \centering
    \includegraphics[width=0.40\textwidth]{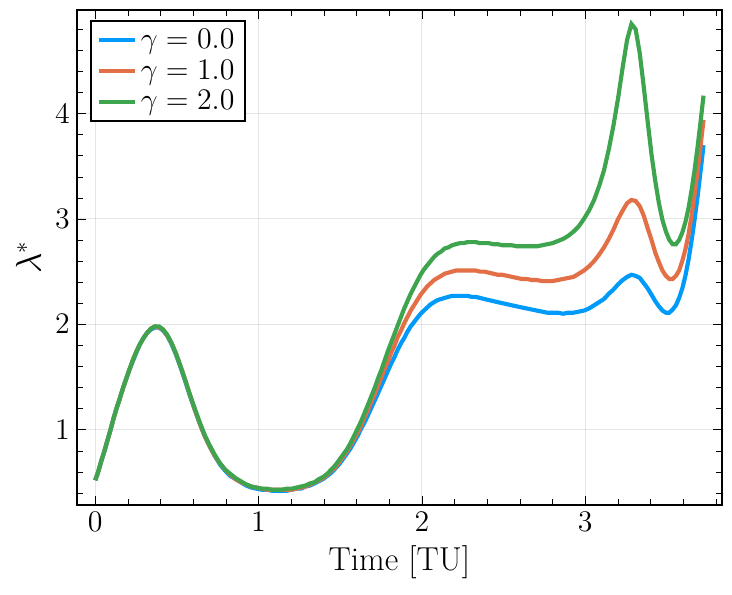}
    \caption{$\lambda^*$ as a function of time for the scenario using a diffusion magnitude $\sigma = 5\times 10^{-6} \text{ km}/\text{s}^{3/2}$.}\label{fig:optimal_lambda}
\end{figure}

In Figure \ref{fig:risk} we plot the risk probability evaluated under both the Gaussian reference and Monte Carlo Gaussian reduction as a function of time, along with the risk upper bound proposed. The left- and right-hand panels again correspond to $\sigma = 5\times10^{-6} \text{ km}/\text{s}^{3/2}$ and $\sigma = 1\times 10^{-6} \text{ km}/\text{s}^{3/2}$. Note the deviation in the risk probability values towards the latter half of the scenario for the true and reference distributions. Importantly, we find that the nominal risk underestimates the truth at the terminal stage of the spacecraft's trajectory. Our upper bound is indeed valid and constrains both the nominal and true risk -- despite its conservativeness. In Figure \ref{fig:optimal_lambda}, we show the optimal $\lambda^*$ for the case where $\sigma = 5\times10^{-6} \text{ km}/\text{s}^{3/2}$ used in generating the risk upper bound. The optimal value is time-varying, and we plot its evolution over the scenario. As mentioned previously, this optimal value depends on the value of the relative entropy upper bound, and we see this reflected in the plot.

\section{CONCLUSIONS}
In this work, we introduced an approach for evaluating risk in stochastic guidance optimization when there exists ambiguity in the state distribution. We achieved this by parameterizing a distribution ambiguity set by the relative entropy distance and determining a risk upper bound using a variational expression for exponential integrals. Additionally, we introduced a principled approach to determine an appropriate time-varying size of the ambiguity set, or relative entropy upper bound, which is particularly applicable for covariance steering applications in nonlinear dynamical systems. The latter bound is a function of a reference distribution's covariance, allowing one to control the ambiguity set in covariance steering applications. The current study formed the fundamentals for applying our methods to chance-constrained covariance steering, and future work aims to make these tools operational in convex optimization procedures.  

\bibliographystyle{IEEEtran}
\bibliography{biblio}

\addtolength{\textheight}{-12cm}   



\end{document}